\documentclass[12pt, english]{article}
\usepackage[utf8]{inputenc}
\usepackage{amsthm}
\usepackage{amssymb}
\usepackage[all]{xy}
\usepackage{amsmath}
\usepackage{tikz-cd}
\usepackage{tikz}
\usepackage{float}
\usepackage{bbm}

\tikzcdset{scale cd/.style={every label/.append style={scale=#1},
		cells={nodes={scale=#1}}}}
\usepackage{rotating}
\usepackage{caption, booktabs}
\usepackage{verbatim}
\usetikzlibrary{calc}

\usepackage{tikz}
\usetikzlibrary{patterns}

\usepackage{pdflscape}

\usepackage{mathtools}
\usepackage[english]{babel}

\usepackage{hyperref}
\usepackage{bookmark}

\newtheorem{definition}{Definition}[section]

\newtheorem{remark}[definition]{Remark}
\newtheorem{lemma}[definition]{Lemma}

\newtheorem{theorem}[definition]{Theorem}

\newtheorem{construction}[definition]{Construction}

\DeclareMathOperator{\Pic}{Pic}

\newcommand{\uproman}[1]{\uppercase\expandafter{\romannumeral#1}}

\begin{document}
	
	\title{Certain degenerations of surfaces in fake weighted projective $3$-space}
	\author{Julius Giesler,\\ University of T\"ubingen}
	\date{\today}
	\maketitle
	\begin{abstract}
		We study degenerations (along an edge) of surfaces in (fake) weighted projective $3$-space. We introduce the Hodge theoretic data and show that the Hodge loci of vanishing cycles both might be proper or not. This illustrates results of Green and Voisin.
	\end{abstract}

%criterion for the Noether-Lefschetz theorem for surfaces in $\mathbb{P}^3$ of degree $\geq 4$, the vanishing cohomology contain a nonzero $(2,0)$-cycle, extends to the weighted projective space, the second criterion of the Noether-Lefschetz theorem, that the Hodge loci of vanishing cycles are proper, fails for surfaces in (fake) weighted projective $3$-space This illustrates the most simplest cases of results of Voisin and Green.
\section{Introduction}
Let $\Delta$ be a $3$-dimensional lattice simplex and $f$ a nondegenerate Laurent polynomial with Newton polytope $\Delta$. Write $U_{reg}(\Delta)$ for all of these Laurent polynomials. Let $\mathbb{P}_{\Delta}$ be the projective toric variety to the normal fan of $\Delta$ and $Z_{\Delta,f}$ the closure of 
\begin{align*}
	Z_f := \{f=0\} \subset T
\end{align*}
in $\mathbb{P}_{\Delta}$. Then $\mathbb{P}_{\Delta}$ is a fake weighted projective $3$-space. Given a polytope $F$ let $l^*(F)$ denote the number of lattice points in the relative interior of $F$.

Assume that there is an edge $e \leq \Delta$ with $l^*(e) >0$. Then we may subdivide $\Delta$ along the edge $e$ into simplices as in the Figure below. 

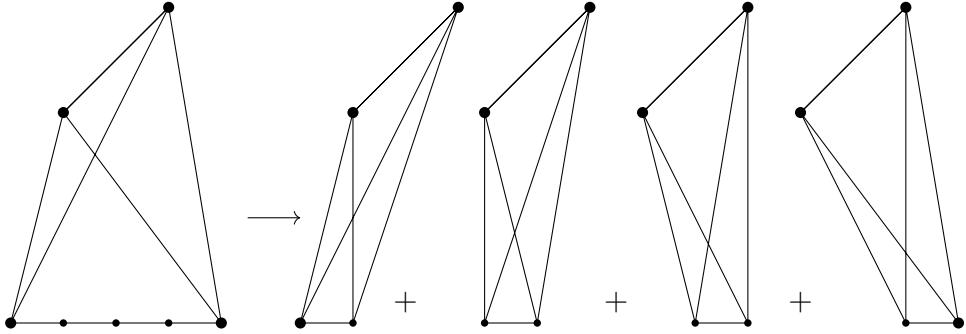
\begin{figure}[H] \label{figure_subdivision_along_edge_into_simplices}
	
	\begin{tikzpicture}[scale=0.7]

		\begin{scope}[scale = 1,yshift = 0cm, xshift = 2cm]
			
			\draw (0,0) -- (4,0) -- (1,4) -- (3,6) -- (0,0) -- (1,4) -- (3,6) -- (4,0);	
			
			\fill (0,0) circle(3pt);
			\fill (4,0) circle(3pt);
			\fill (1,4) circle(3pt);
			\fill (3,6) circle(3pt);
			
			\fill (1,0) circle(2pt);
			\fill (2,0) circle(2pt);
			\fill (3,0) circle(2pt);
			\fill (4,0) circle(2pt);
			
			\draw[->] (4.5,2)--(5.5,2);

		\end{scope}

		\begin{scope}[scale = 1, xshift =7.5cm]
			
			\draw (0,0) -- (1,0) -- (1,4) -- (3,6) -- (0,0) -- (1,4) -- (3,6) -- (1,0);	
			
			\fill (0,0) circle(3pt);
			\fill (1,0) circle(2pt);
			\fill (1,4) circle(3pt);
			\fill (3,6) circle(3pt);
			
			\fill (2,0) node[above] {+};
			
		\end{scope}

		\begin{scope}[scale = 1, xshift = 10cm]
			
			\draw (1,0) -- (2,0) -- (1,4) -- (3,6) -- (1,0) -- (1,4) -- (3,6) -- (2,0);	
			
			\fill (1,0) circle(2pt);
			\fill (2,0) circle(2pt);
			\fill (1,4) circle(3pt);
			\fill (3,6) circle(3pt);
			
			\fill (3.5,0) node[above] {+};

		\end{scope}
		
		\begin{scope}[scale = 1, xshift = 13cm]
			
			\draw (2,0) -- (3,0) -- (1,4) -- (3,6) -- (2,0) -- (1,4) -- (3,6) -- (3,0);	
			
			\fill (2,0) circle(2pt);
			\fill (3,0) circle(2pt);
			\fill (1,4) circle(3pt);
			\fill (3,6) circle(3pt);
			
			\fill (4,0) node[above] {+};

		\end{scope}
		
		\begin{scope}[scale = 1, xshift = 16cm]
			
			\draw (3,0) -- (4,0) -- (1,4) -- (3,6) -- (3,0) -- (1,4) -- (3,6) -- (4,0);	
			
			\fill (3,0) circle(2pt);
			\fill (4,0) circle(3pt);
			\fill (1,4) circle(3pt);
			\fill (3,6) circle(3pt);

		\end{scope}

	\end{tikzpicture}
	\caption{Subdivision of simplex $\Delta$ along an edge into simplices $\Delta_i$.}
\end{figure}

Such a subdivision corresponds to a degeneration both of the toric $3$-fold $\mathbb{P}_{\Delta}$ and of the hypersurface $Z_{\Delta,f}$. If $f \in U_{reg}(\Delta)$ is \glqq enough nondegenerate\grqq \,then we get
\[ \dim \, H^{2,0}(Z_{\Delta,f}) = \underbrace{p_g(Z_{\Delta,f})}_{l^*(\Delta)} = \sum\limits_{i = 1}^r  \underbrace{p_g(Z_{\Delta_i,f_i})}_{l^*(\Delta_i)} + \sum\limits_{i = 1}^{r-1} \underbrace{g(Z_{\Delta_i,f_i} \cap Z_{\Delta_{i+1},f_{i+1}})}_{l^*(\Delta_i \cap \Delta_{i+1})}  \]
with $Z_{\Delta_i,f_i}$ a closure in $\mathbb{P}_{\Delta_i}$. We restrict to a one-dimensional subfamily $Z_{\Delta,t}$, $t \in B$, where $B$ denotes the unit disc, and switch to a semistable degeneration $\mathcal{X} \overset{p}{\rightarrow} B$ by base change and birational maps (see construction \ref{construction_semistable_degeneration}). \\ \\
Write $X_t := p^{-1}(t)$. Then given $t \neq 0$ the fibre $X_t$ is a resolution of singularities of $Z_{\Delta,t}$ and there is a toric morphism $\mathbb{P}' \rightarrow \mathbb{P}_{\Delta}$ inducing this resolution. Considering the \textit{geometric genus} $p_g(X_t)$ we can keep track of the original degeneration within this semistable degeneration.
% All additional Picard classes on $X_t$ (compared to $Z_{\Delta,t}$) come from the surrounding toric variety $\mathbb{P}'$. \\
There are cohomology classes on $X_t:= p^{-1}(t)$, $t \neq 0$, that come from $X_0$. But there might be also additional \textit{vanishing cohomology classes}. These vanishing cohomology classes are rational cohomology classes and orthogonal on those coming from $X_0$ with respect to the intersection product on $H^2(X_t, \mathbb{Q})$. \\ \\
The number of these vanishing cohomology classes in the semistable degeneration from above equals
\[ 2 \cdot \sum\limits_{i=1}^{r-1} l^*(\Delta_{i} \cap \Delta_{i+1}).  \]
Since these classes are rational and $H^{0,2}(X_t) = \overline{H^{2,0}(X_t)}$, assuming
\begin{align*}
	\sum\limits_{i=1}^{r-1} l^*(\Delta_{i} \cap \Delta_{i+1}) > \sum\limits_{i = 1}^r l^*(\Delta_i),
\end{align*}
there is a cohomology class $\lambda_t \in H_{van}^2(X_t,\mathbb{C}) \cap H^{1,1}(X_t)$. This means that the Hodge locus $U_{\lambda}^1 := \{ f \in U_{reg}(\Delta) \mid \lambda_f \in F^1 H^2(Z_{\Delta,f},\mathbb{C}) \}$ is not proper:

\begin{theorem}
	Let $\Delta$ be a $3$-dimensional simplex having an edge $e$ with $l^*(e) >0$. Let $f \in U_{reg}(\Delta)$ be sufficiently nondegenerate and consider the subdivision
	\[ \Delta = \Delta_1 \cup ... \cup \Delta_r, \quad \Delta_{i,i+1}:= \Delta_i \cap \Delta_{i+1}  \]
	from above. If 
	\begin{align} \label{assumption_interior_points_intersection_subdivision}
		\sum\limits_{i=1}^{r-1} l^*(\Delta_{i,i+1}) > \sum\limits_{i = 1}^r l^*(\Delta_i),
	\end{align}
	there is $\lambda \in H_{van}^2(X_t,\mathbb{C})$ such that the Hodge locus $U_{\lambda}^1$ is not a \glq proper\grq \, subvariety.
\end{theorem}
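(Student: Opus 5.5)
The plan is to build the semistable degeneration $\mathcal{X} \overset{p}{\rightarrow} B$ of Construction \ref{construction_semistable_degeneration} and read off the vanishing cohomology from the Clemens--Schmid sequence. Then a dimension count shows that the rational vanishing subspace must meet $H^{1,1}(X_t)$.

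\textbf{Step 1: geometric genus and the fibre $X_0$.} First I will identify $p_g(X_t)=l^*(\Delta)$. I will use the decomposition of $l^*(\Delta)$ displayed in the introduction, which is valid because $f$ is sufficiently nondegenerate. By construction, $X_0$ is a chain of components $Y_1,\dots,Y_r$, where $Y_i$ is birational to $Z_{\Delta_i,f_i}$. Consecutive components meet along the curves $C_i=Y_i\cap Y_{i+1}$, which are birational to $Z_{\Delta_i,f_i}\cap Z_{\Delta_{i+1},f_{i+1}}$. These curves have genus $g(C_i)=l^*(\Delta_{i,i+1})$. Extra components coming from the base change are ruled, so they contribute neither $p_g$ nor odd cohomology beyond what is inherited from the $C_i$.

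\textbf{Step 2: computing the vanishing cohomology.} I will write down the weight spectral sequence for the limiting mixed Hodge structure. Its $E_1$ page in degree $2$ contains $H^1(C_i)$ in weight $1$ and $H^1(C_i)(-1)$ in weight $3$. The monodromy logarithm $N$ maps $\mathrm{Gr}^W_3$ isomorphically onto $\mathrm{Gr}^W_1$. The part of $H^2(X_t)$ not coming from $H^2(X_0)$ should be $\mathrm{Gr}^W_3 H^2_{\lim}$. Since $X_0$ is a chain, the dual graph is a tree, so the restriction differentials do not cancel these pieces. This gives
\[ \dim H^2_{van}(X_t,\mathbb{Q}) = \sum_{i=1}^{r-1} \dim H^1(C_i) = 2\sum_{i=1}^{r-1} l^*(\Delta_{i,i+1}). \]
These classes are orthogonal to the image of $H^2(X_0)$ under the intersection form. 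This is the count announced in the introduction.

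\textbf{Step 3: Hodge-theoretic dimension count.} Set $V=H^2_{van}(X_t,\mathbb{C})$. Since $V$ is defined over $\mathbb{Q}$, it is stable under complex conjugation. Consider the projection $\pi: V\to H^{2,0}(X_t)\oplus H^{0,2}(X_t)$. The target has dimension $2p_g(X_t)=2l^*(\Delta)$. Using Step 1, this equals $2\sum_i l^*(\Delta_i)+2\sum_i l^*(\Delta_{i,i+1})$.

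\textbf{Main obstacle.} Comparing the dimension of $V$ with the full $2p_g$ is too crude, because the $H^{2,0}$ part of $X_t$ contains the contributions $l^*(\Delta_{i,i+1})$ as well. I would instead use the finer structure of the limit mixed Hodge structure. The $(2,0)$-classes coming from $\mathrm{Gr}^W_3$ are only $\sum l^*(\Delta_{i,i+1})$ many, and they pair with $N$-images. The classes of $H^{2,0}$ that are orthogonal to the classes from $X_0$ contribute at most $\sum_i l^*(\Delta_i)$ dimensions after quotienting. Concretely, I will show that the relevant map has target of dimension at most $2\sum_i l^*(\Delta_i)$, restricting to the component of $F^2$ dual to $V$. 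The hypothesis (\ref{assumption_interior_points_intersection_subdivision}) then forces $\dim V > 2\sum_i l^*(\Delta_i)$, so the kernel contains a nonzero $\lambda_t\in V\cap H^{1,1}(X_t)$. Making this bound rigorous, by identifying exactly which part of $F^2$ the vanishing classes can pair against, is the step I expect to be hardest.

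\textbf{Step 4: the locus is not proper.} Finally I will use parallel transport in $t$, and in $f$ within the sufficiently nondegenerate locus, over the simply connected parameter region. This keeps $\lambda$ a flat rational class that lies in $F^1$ at every point, because the dimension argument applies uniformly. Hence $U^1_\lambda$ contains an open subset of $U_{reg}(\Delta)$ and is not a proper subvariety.
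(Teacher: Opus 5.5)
There is a genuine gap, and it is the step you flagged yourself. The bound in Step~3 is not merely unproved but false, so no refinement of the count can produce $\lambda_t$.

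Put $G:=\sum_i l^*(\Delta_{i,i+1})$. In the limit mixed Hodge structure of your Step~2 one has $W_0=0$, $W_1=\mathrm{im}\,N$ and $W_2=\ker N$. Moreover $F^2_{\lim}$ surjects onto $(\mathrm{Gr}^W_3)^{2,1}\cong\bigoplus_i H^{1,0}(C_{i,i+1})$, which has dimension $G$. Since $Q(Nx,y)=-Q(x,Ny)$, the form $Q$ kills $W_1\times W_2$. It induces a perfect pairing $\mathrm{Gr}^W_1\times\mathrm{Gr}^W_3\to\mathbb{C}$ matching type $(0,1)$ with $(2,1)$. Hence every nonzero real $v\in W_1$ satisfies $Q(v,F^2_{\lim})\neq 0$, because $v^{0,1}=\overline{v^{1,0}}\neq 0$.

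By the nilpotent orbit theorem, $F_t=\exp(zN)\Psi(t)$ with $t=e^{2\pi i z}$, $\Psi$ holomorphic and $\Psi(0)=F_{\lim}$.
\begin{itemize}
\item For $v\in\ker N$ this gives $Q(v,F^2_t)=Q(v,\Psi(t)^2)$. This is nonzero for $0<|t|\ll 1$, uniformly on the unit sphere of $W_{1,\mathbb{R}}$.
\item For a nonzero real $v$ in a complement of $\ker N$ one has $Q(v,\exp(zN)w)=Q(v,w)-zQ(Nv,w)$ with $0\neq Nv\in W_{1,\mathbb{R}}$. The term linear in $z$ dominates as $\mathrm{Im}\,z\to\infty$.
\end{itemize}
A real class is of type $(1,1)$ iff it is $Q$-orthogonal to $F^2_t=H^{2,0}(X_t)$. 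So under either reading of your $V$, you get $V_{\mathbb{R}}\cap H^{1,1}(X_t)=0$ for all $0<|t|\ll 1$, whatever the values of the $l^*$'s.

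Put differently, the part of $F^2$ ``dual to $V$'' is exactly the $G$-dimensional piece coming from $\mathrm{Gr}^W_3$ that you propose to discard. On real classes that piece alone already gives an injective map, from real dimension $2G$ into real dimension $2G$. The pieces $H^{2,0}(V_i)$ lie in $W_2$ and play no role. Relatedly, the orthogonality you assert in Step~2 fails: $(\ker N)^{\perp}=\mathrm{im}\,N\subset\ker N$, so no complement of $\ker N$ is orthogonal to it.

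Step~4 is a second, independent gap. Even if $V_{\mathbb{R}}\cap H^{1,1}(X_t)$ were nonzero at every $t$, this subspace moves with $t$, so you do not get a single flat class lying in it for all $t$. For comparison, on a general quartic surface $H^{1,1}_{\mathbb{R}}$ has real codimension $2$ in $H^2$, yet only multiples of $h$ stay of type $(1,1)$. Nor need a real kernel contain a rational point.

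In fact the conclusion cannot hold for a real flat class at all. Such a class $\lambda$ of type $(1,1)$ on an open set stays of type $(1,1)$ along the whole punctured disc. Then $T$ preserves the span of $h$ and the $T^k\lambda$, and on the primitive part of this span $Q$ is definite by the Hodge index theorem. A unipotent isometry of a definite form is trivial, so $\lambda\in\ker N$. That makes $\lambda$ zero in $H^2(X_t,\mathbb{Q})/\ker N$, and for the reading $V=\mathrm{im}\,N$ it contradicts the first paragraph.

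Your instinct that the crude count is too weak is right, and it applies to the paper's own proof too. The paper uses exactly that count, but bounds the target by $p_g$ instead of $2p_g$; that is what makes the hypothesis suffice there. What the inequality $2G>p_g=\sum_i l^*(\Delta_i)+G$ actually yields is a complex class in $H^2_{van}(X_t,\mathbb{C})\cap F^1_t$ at a single $t$. Such a class is neither real of type $(1,1)$ nor flat, so it cannot serve as a fallback either.
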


The criterion (\ref{assumption_interior_points_intersection_subdivision}) appears already for surfaces in weighted projective $3$-space, as one can check with a computer program.  In the following we check (Construction \ref{construction_criterion_of_Voisin}) that a \textit{Voisin type criterion}
\begin{align*}
	H^{2,0}(X_t) \cap H_{van}^2(X_t,\mathbb{C}) \neq \{0\}
\end{align*}
remains valid (except if $H_{van}^2(X_t,\mathbb{C}) = \{0\}$, which is not essential, see Remark \ref{remark_finitely_many_times_Voisin_type}) in our setting of surfaces in (fake) weighted projective $3$-space. This criterion might be helpful in proving the \textit{properness} of the \textit{Noether-Lefschetz locus}.
% We guess that this criterion applies to the situation to give the \textit{properness} of the Noether-Lefschetz locus for surfaces in (fake) weighted projective $3$-space with $p_g(X_t)> 0$ (and degree $k \cdot q$, where $q=\lcm(q_0,...,q_3)$, $k \geq 1$, for weighted projective space $\mathbb{P}(q_0,...,q_3)$ and similarly for fake weighted projective spaces), the same as for surfaces $X \subset \mathbb{P}^3$ of degree $\geq 4$. 
\\ \\
This paper studies elementary degenerations (degenerations along an edge) and transports two approaches to the \textit{properness} of the \textit{Noether-Lefschetz locus} of surfaces in $\mathbb{P}^3$ (compare \cite[Thm.3.33, section 6.3]{Voi03}) to the setting of surfaces in (fake) weighted projective $3$-space. It serves as an elementary exemplification of a result of Green (\cite{Gre84},\cite[Thm.6.28]{Voi03}). In the first version of this preprint there was an elementary mistake. We note that correcting this mistake gives our result a different intuition.

\section{Degenerations and the geometric genus}

\begin{definition}
	A degeneration of algebraic surfaces is a proper flat morphism $p: \mathcal{X} \rightarrow B$, where $B \subset \mathbb{C}$ is the unit disc, such that $X_t:= p^{-1}(t)$ is a smooth projective surface for $t \neq 0$ and the total space $\mathcal{X}$ is a K\"ahler manifold. The degeneration is called semistable if we may write $X_0 = \sum\limits_{i = 1}^r V_i$, where $V_i$ are smooth and the irreducible components of $X_0$, the $V_i$ intersect transversally and locally $p$ is defined by 
	\[ t = x_1 \cdot ... \cdot x_k   \]
\end{definition}

\begin{construction} (Subdivison of a simplex along an edge) \\
	\normalfont
	Assume that $\Delta$ is a $3$-dimensional simplex with $l^*(\Delta) > 0$ and having an edge $e$ with $r:= l^*(e) > 0$. Choose a subdivision 
	\begin{align*}
		& \Delta = \Delta_1 \cup ... \cup \Delta_r \\
		& \Delta_{i,i+1} := \Delta_i \cap \Delta_{i+1}.
	\end{align*}
	
	Let $f_t \in U_{reg}(\Delta)$ be a one-parameter family of nondegenerate Laurent polynomials. Then we may choose a birational toric morphism $\mathbb{P}' \rightarrow \mathbb{P}_{\Delta}$ such that the closure $X_t:= Z_t'$ of $Z_{f_t}$ is smooth for $t \neq 0$.
	\\
	Assume that $f_{0 \vert{\Delta_i}} \in U_{reg}(\Delta_i)$ for $i=1,...,r$ and $f_{0 \vert{\Delta_{i,i+1}}} \in U_{reg}(\Delta_{i,i+1})$ for $i=1,..,r-1$. Denoting by $Z_{\Delta_i}$ the closure of $\{f_{0 \vert{\Delta_i}} = 0\}$ in $\mathbb{P}_{\Delta_i}$ we may assume that $Z_{\Delta_i}$ and $Z_{\Delta_{i+1}}$ intersect transversally. Since $Z_{\Delta_i} \subset \mathbb{P}_{\Delta_i}$ is ample this is a nonempty Zariski open conditions on the coeffficients of $f_0$.
\end{construction}

\begin{construction} \label{construction_semistable_degeneration} (Semistable degeneration) \\
	\normalfont
	Given a degeneration $p: \mathcal{X} \rightarrow B$ there exists a base change $\phi: B \rightarrow B$ given by  $t \mapsto t^N$, a semistable degeneration $p': \mathcal{Y} \rightarrow B$ and a diagram
	\begin{equation*}
		\begin{tikzcd}
			\mathcal{Y} \arrow[dashed]{r}{\psi} \arrow{dr}{p'} & \mathcal{X} \times_{\phi} B \arrow[d] \arrow[r] & \mathcal{X} \arrow{d}{p} \\
			& B \arrow{r}{\phi} & B
		\end{tikzcd}	
	\end{equation*}
	such that $\psi$ is a birational map of the central fibre given by blowing up and blowing down subvarieties. 
\end{construction}

\begin{lemma}
	Let $\Delta$ be a $3$-dimensional lattice simplex with $l^*(\Delta) >0 $ and an edge $e$ wit $l^*(e) >0$. Choose the subdivision 
	\[ \Delta = \Delta_1 \cup ... \cup \Delta_r, \quad \Delta_{i,i+1} := \Delta_i \cap \Delta_{i+1} \]
	along $e$ with hypersurfaces $Z_{\Delta}$, $V_i := Z_{\Delta_i}$ and double curves $C_{i,i+1} := Z_{\Delta_i} \cap Z_{\Delta_{i+1}}$. Then
	\[ p_g(X_t) = \sum\limits_{i = 1}^r p_g(Z_{\Delta_i}) + \sum\limits_{i = 1}^{r-1} g(C_{i,i+1}).  \]
	As a consequence for all additional components $V_{r+1},...,V_l$ in a semistable degeneration and all other double curves $C_{ij}$ we have
	\[ p_g(V_i) = 0, \quad g(C_{ij}) = 0. \]
\end{lemma}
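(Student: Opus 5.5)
Since $X_t$ is a smooth resolution of $Z_{\Delta,f_t}$ induced by the toric morphism $\mathbb{P}'\to\mathbb{P}_\Delta$, the plan has three steps. First compute $p_g(X_t)$ from the Newton polytope. Then compute the right-hand side combinatorially. Finally, match the two via a Clemens--Schmid / weight filtration argument, which also gives the consequence.

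\emph{Step 1.} By Khovanskii's formula, $p_g(X_t)=p_g(Z_{\Delta,f_t})=l^*(\Delta)$. The singularities of $Z_{\Delta,f_t}$ come only from the torus-fixed points and singular strata of $\mathbb{P}_\Delta$. For nondegenerate $f_t$ these are rational, since they are quotient singularities in the transversal slice, so $p_g$ is a birational invariant here. In the same way, $p_g(Z_{\Delta_i})=l^*(\Delta_i)$. Each double curve $C_{i,i+1}$ is the nondegenerate curve in the toric surface $\mathbb{P}_{\Delta_{i,i+1}}$ defined by $f_{0\vert\Delta_{i,i+1}}$, so $g(C_{i,i+1})=l^*(\Delta_{i,i+1})$.

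\emph{Step 2 (combinatorics).} We show
\[ l^*(\Delta)=\sum_i l^*(\Delta_i)+\sum_i l^*(\Delta_{i,i+1}). \]
The $\Delta_i$ are obtained by joining the opposite edge $e'$ to the consecutive lattice points of $e$. Every interior lattice point of $\Delta$ therefore lies either in the relative interior of exactly one $\Delta_i$, or in the relative interior of exactly one triangle $\Delta_{i,i+1}=\mathrm{conv}(e',p_i)$, where $p_i$ is an interior lattice point of $e$. It cannot lie on the relative interior of $e'$, on an edge $[p_i,v]$ with $v$ a vertex of $e'$, or on $\partial\Delta$. The reason is that these edges and faces lie in $\partial\Delta$, except the inner triangles themselves. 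Conversely, interior points of the $\Delta_i$ and of the $\Delta_{i,i+1}$ are interior to $\Delta$. This partition of $\mathrm{int}(\Delta)\cap M$ gives the formula, and combined with Step 1 it gives the first claim.

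\emph{Step 3 (the consequence).} For a semistable degeneration $X_0=\sum V_i$ with Kähler total space, the limiting mixed Hodge structure satisfies
\[ \dim \mathrm{Gr}_F^2 H^2_{\lim}=p_g(X_t). \]
Its weight graded pieces give
\[ p_g(X_t)=\sum_{\text{all }i} p_g(V_i)+\sum_{\text{all double curves}} g(C_{ij}) + h^{2,0}(\text{weight }4\text{ part}). \]
Here the weight-$3$ $(2,1)$ part comes from $H^1$ of the double curves via Gysin, and it contributes genera through its $F^2$-dimension once one accounts for the conjugate piece from the weight-$1$ part. The weight-$4$ part has trivial $F^2$ contribution beyond Hodge-Tate classes, so it contributes $0$. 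More precisely, I would use the Clemens--Schmid exact sequence and the spectral sequence of the double-complex $E_1^{p,q}$ to get the inequality
\[ p_g(X_t)\ge \sum_{\text{all }i}p_g(V_i)+\sum_{i<j} g(C_{ij}). \]
Comparing with the first part, which already exhausts $p_g(X_t)$ with the original components and curves (these persist birationally in the semistable model after base change), all remaining terms must vanish. Since $p_g$ and $g$ are nonnegative, $p_g(V_i)=0$ and $g(C_{ij})=0$ for the additional components and double curves.

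The main obstacle is this last inequality. One must check that the $F^2$-graded contributions of $H^2(V_i)$ (weight $2$) and of $H^1(C_{ij})$ (weight $1$, contributing via $N$ to $\mathrm{Gr}^W_3$) add up without cancellation. I would first try the $E_2$-degeneration of the weight spectral sequence together with the fact that the $d_1$ differentials are morphisms of Hodge structures. Only their kernel and cokernel count, and this can decrease dimensions, so I would bound from below using that $H^1(C_{ij})^{1,0}$ injects modulo images from $H^1(V_i)$. It is also necessary to check that the blow-ups and blow-downs in Construction \ref{construction_semistable_degeneration} do not change the $p_g(V_i)$ and $g(C_{i,i+1})$ of the original components.
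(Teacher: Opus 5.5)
\textbf{Gap in Step 3.} Your Steps 1 and 2 coincide with the paper's argument: Danilov--Khovanskii for $l^*(\Delta)$, $l^*(\Delta_i)$, $l^*(\Delta_{i,i+1})$, plus additivity of interior points. Your partition argument for the additivity is correct and more explicit than the paper's bare assertion.

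The problem is the inequality $p_g(X_t)\ge\sum p_g(V_i)+\sum g(C_{ij})$. It does not hold for semistable degenerations in general, and the remedy you sketch (``$H^1(C_{ij})^{1,0}$ injects modulo images from $H^1(V_i)$'') does not produce it. Running the weight spectral sequence through gives:
\begin{itemize}
\item[(a)] $h^{2,0}(\mathrm{Gr}^W_2)=\sum p_g(V_i)$;
\item[(b)] $h^{2,1}(\mathrm{Gr}^W_3)=h^{1,0}\bigl(\mathrm{coker}(\bigoplus H^1(V_i)\to\bigoplus H^1(C_{ij}))\bigr)$;
\item[(c)] $\mathrm{Gr}^W_4$ is of type $(2,2)$ and dimension $h^2(\Gamma)$, so, contrary to what you wrote, it lies entirely in $F^2$.
\end{itemize}
Since the kernel of the restriction map in (b) is $\mathrm{Gr}^W_1H^1_{\mathrm{lim}}$, this yields
\[
p_g(X_t)=\sum p_g(V_i)+\sum g(C_{ij})-\sum q(V_i)+q(X_t)-h^1(\Gamma)+h^2(\Gamma).
\]
With $q(X_t)=0$, your inequality is therefore equivalent to $h^2(\Gamma)\ge\sum q(V_i)$. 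This fails for Type II degenerations of K3 surfaces, which have elliptic ruled middle components.

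It also fails in the present setting if the model is built with a base change $t\mapsto t^N$, $N>1$. Resolving the $A_{N-1}$-singularities of the total space along each $C_{i,i+1}$ inserts $N-1$ surfaces ruled over $C_{i,i+1}$, each with $q=l^*(\Delta_{i,i+1})$, and $N-1$ further double curves isomorphic to $C_{i,i+1}$. For such a model both your inequality and the second assertion of the lemma are false.

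\textbf{What is missing.} You need $q(V)=0$ for \emph{every} component of the model, including the additional ones. This is exactly what the paper uses: it quotes Persson's formula in the form $p_g(X_t)=\sum p_g(V_i)+\sum g(C_{ij})+h^2(\Gamma)$, and it states $q(V_i)=0$ in the limiting-MHS construction. With this input, the display above becomes an equality of nonnegative terms. Your comparison with Steps 1--2 then finishes the proof and also gives $h^2(\Gamma)=0$.

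To justify $q(V)=0$ you have to use the toric structure of the model:
\begin{itemize}
\item[(i)] Components with $3$-dimensional Newton polytope have $h^{1,0}=0$ by Danilov--Khovanskii.
\item[(ii)] The model must be chosen so that no components ruled over positive-genus curves appear. One way is to take a height function with primitive bend across each wall $\Delta_{i,i+1}$, so that no base change along the $C_{i,i+1}$ is needed.
\end{itemize}
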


\begin{proof}
	We have $p_g(X_t) = l^*(\Delta)$ and $p_g(Z_{\Delta_i}) = l^*(\Delta_i)$ for $i=1,...,r$ by \cite{DK86}. Further $g(C_{i,i+1}) = l^*(\Delta_{i,i+1})$ and
	\begin{align} \label{geom_genus_additive_sum}
		l^*(\Delta) = \sum\limits_{i = 1}^r l^*(\Delta_i) + \sum\limits_{i = 1}^{r-1} l^*(\Delta_{i,i+1}).
	\end{align}	
	Let $\mathcal{X} \rightarrow B$ be a semistable degeneration one obtains by the semistable reduction Theorem. Then by (\cite[Prop.2.7.3]{Pers77}) we have
	\begin{align}\label{formula_geometric_genus_degeneration}
		p_g(X_t) = \sum\limits_{i = 1}^l p_g(V_i) + \sum\limits_{i \neq j} g(C_{ij}) + h^2(\Gamma). 	
	\end{align}
	where $\Gamma$ denotes the dual graph of $X_0$. But all nonzero terms are already contained in $p_g(Z_{\Delta_i})$ and $g(C_{i,i+1})$. Thus $h^2(\Gamma) = 0$ and the result follows.
\end{proof}

In the following we summarize some known results on degenerations (of algebraic surfaces). See (\cite[Ch.2]{Pers77}) and (\cite{Mor84}) for details.

\begin{remark} \label{remark_diff_subdiv}
	\normalfont
	The result stays true for different subdivisions of $\Delta$ such that no interior point of $\Delta$ belongs to an edge of the subdivision.	
Let 
\[ T: H^2(X_t, \mathbb{Q}) \rightarrow H^2(X_t, \mathbb{Q}), \quad t \neq 0  \]
be the monodromy homomorphism and $N := \log(T)$. Then in the above situation in fact $N = T-I$ since $h^2(\Gamma) =0$. 
\end{remark}

\begin{construction} (Clemens-Schmid exact sequence) \\
	\normalfont
	By the \textit{invariant cycle theorem} (\cite[Thm.4.24]{Voi03}) the $T$-invariant cohomology classes of $H^2(X_t, \mathbb{Q})$ come from
	$H^2(X_0, \mathbb{Q})$, that is	the following sequence is exact at the middle term
	\begin{align} \label{exact_sequence_part_Clemens_Schmid}
	H^2(X_0, \mathbb{Q}) \overset{\alpha}{\rightarrow} H^2(X_t, \mathbb{Q}) \overset{N}{\rightarrow} H^2(X_t, \mathbb{Q}).	
	\end{align}
	In fact this triple fits into an exact sequence, the \textit{Clemens-Schmid exact sequence} (see \cite[Ch.2.7 Thm.V]{Pers77} for details on the maps $\alpha, \, \beta$ and $\gamma$)
	\begin{align} \label{Clemens_Schmidt_exact_sequence}
		0 &\rightarrow H^{0}(X_t, \mathbb{Q}) \rightarrow H_{4}(X_0, \mathbb{Q}) \overset{\alpha}{\rightarrow} H^{2}(X_0, \mathbb{Q}) \overset{\beta}{\rightarrow} H^2(X_t, \mathbb{Q}) \\
		& \overset{N}{\rightarrow} H^{2}(X_t, \mathbb{Q})  \overset{\gamma}{\rightarrow} H_2(X_0, \mathbb{Q})  \nonumber
	\end{align}
\end{construction}

\begin{definition}
	We define 
	\[ H^2(X_t, \mathbb{Q})_{van} \cong H^2(X_t, \mathbb{Q})/\ker(N).  \]
%	\[ H^2(X_t, \mathbb{Q})_{van} \cong \ker(\gamma: H^2(X_t, \mathbb{Q}) \rightarrow H_2(X_0, \mathbb{Q})). \]
	and call its elements \textit{vanishing cohomology classes}.
\end{definition}

%\begin{construction} (Limiting mixed Hodge structure) \\
%	\normalfont
%	By the Clemens-Schmid exact sequence
%	\[ H^2(X_t, \mathbb{Q})_{van} \cong H^2(X_t, \mathbb{Q})/\ker(N).  \]

\begin{construction} \label{construction_vanishing_cohomology} (Limiting mixed Hodge structure) \\
	\normalfont
	We could equip $X_t$ and $X_0$ with mixed Hodge structures and then the Clemens-Schmidt exact sequence gets an exact sequence of MHS: Let
	\[ 0 \subset W_0 \subset W_1 \subset W_2 \subset W_3 = W_4 = H^2(X_t, \mathbb{Q})  \]
	be the weight filtration. Then $W_0=0$ since $h^2(\Gamma) = 0$ and since $q(V_i)=0$ we get
\begin{align*}
	&W_2/W_1 = \ker \Big( \bigoplus\limits_i H^{2}(V_i,\mathbb{C}) \rightarrow \bigoplus\limits_i H^2(C_{i,i+1},\mathbb{C}) \Big), \\
	&W_1 =  W_1/W_0  \cong \bigoplus\limits_{i = 1}^{r-1} H^1(C_{i,i+1}, \mathbb{Q})
\end{align*} 
	Further $N$ is of type $(-1,-1)$ thus $H^2(X_t, \mathbb{Q})_{van} = W_3/W_2 \cong W_1/W_0$. We write $F_{lim}^p$ for the \textit{limiting Hodge structure} on $H^2(X_t,\mathbb{C})$. The mixed Hodge structure on $H^2(X_0,\mathbb{C})$ is computed via \textit{Mayer-Vietoris} sequences from equi-dimensional strata of $X_0$.
\end{construction}

\begin{remark} (Vanishing classes orthogonal on $\ker(N)$) \\
	\normalfont
	The importance of vanishing cohomology classes arise from the fact that we have a orthogonal decomposition
	\begin{align} \label{orthogonal_sum_decomposition_van_coh} 
		H^2(X_t, \mathbb{Q}) \cong H^2(X_t, \mathbb{Q})_{van} \oplus \ker(N). 
	\end{align}
	with respect to the intersection pairing. Following (\cite[Proof of Prop.2.27]{Voi03}) we give a proof of the orthogonality of this direct sum: $X_0$ is a deformation retract of $\mathcal{X}$. By Poincaré duality we get
	\[  H_2(X_0, \mathbb{Q}) \cong  H_2(\mathcal{X}, \mathbb{Q}) \cong H^4(\mathcal{X}, \mathbb{Q}) \]
	and the homomorphism $\gamma$ may be identified with the Gysin homomorphism $j_{*}: H^2(X_t, \mathbb{Q}) \rightarrow H^4(\mathcal{X}, \mathbb{Q})$, $j: X_t \rightarrow \mathcal{X}$ the inclusion. Thus we get $H^2(X_t, \mathbb{Q})_{van} \cong \ker(j_*)$. It follows
	\[ x.j^{*}(y) = j_*(x).y = 0 \]
	for $x \in H^2(X_t, \mathbb{Q})_{van}$, $y \in H^2(\mathcal{X}, \mathbb{Q})$ by the projection formula. Since $\ker(N) \cong j^{*}H^2(\mathcal{X}, \mathbb{Q})$ the direct sum is orthogonal.
	
\end{remark}

	%Further $X_0$ is a deformation retract of $\mathcal{X}$ and $\iota^* H^2(\mathcal{X}, \mathbb{Q}) \subset \ker(N)$ by the exact sequence \ref{exact_sequence_part_Clemens_Schmid} where $\iota: X_t \rightarrow \mathcal{X}$ denotes the inclusion.

\section{Vanishing cohomology classes and the Hodge decomposition}

\begin{theorem}\label{theorem_properness_Hodge_loci}
	Let $\Delta$ be a $3$-dimensional simplex having an edge $e$ with $l^*(e) >0$. Let $f \in U_{reg}(\Delta)$ be sufficiently nondegenerate and consider the subdivision
	\[ \Delta = \Delta_1 \cup ... \cup \Delta_r, \quad \Delta_{i,i+1}:= \Delta_i \cap \Delta_{i+1}  \]
	from above. If 
	\begin{align} \label{assumption_interior_points_intersection_subdivision}
		\sum\limits_{i=1}^{r-1} l^*(\Delta_{i,i+1}) > \sum\limits_{i = 1}^r l^*(\Delta_i),
	\end{align}
there is $\lambda \in H_{van}^2(X_t,\mathbb{C})$ such that the Hodge locus $U_{\lambda}^1 := \{ f \in U_{reg}(\Delta) \mid \lambda_f \in F^1 H^2(Z_{\Delta,f},\mathbb{C}) \}$ is not a \glq proper\grq \, subvariety.
\end{theorem}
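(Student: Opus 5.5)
The plan is a dimension count on the vanishing cohomology. I combine Construction \ref{construction_vanishing_cohomology} with the Hodge numbers of $X_t$ computed in the Lemma on $p_g(X_t)$.

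First, by Construction \ref{construction_vanishing_cohomology} we have $H^2(X_t,\mathbb{Q})_{van}\cong W_1\cong\bigoplus_{i=1}^{r-1}H^1(C_{i,i+1},\mathbb{Q})$. Using $g(C_{i,i+1})=l^*(\Delta_{i,i+1})$ from the proof of that Lemma, this gives
\[ \dim_{\mathbb{C}} H^2_{van}(X_t,\mathbb{C}) = 2\sum_{i=1}^{r-1} l^*(\Delta_{i,i+1}). \]
Via the orthogonal decomposition (\ref{orthogonal_sum_decomposition_van_coh}), I realize $H^2_{van}(X_t,\mathbb{Q})$ as a rational subspace of $H^2(X_t,\mathbb{Q})$, namely the orthogonal complement of $\ker(N)$. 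It is therefore defined over $\mathbb{Q}$ and stable under complex conjugation. It is also locally constant in $t$, being the orthogonal complement of the monodromy invariants, so $\lambda$ makes sense as a flat section over the punctured disc.

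Second, I intersect with $F^1$. In $H^2(X_t,\mathbb{C})$ the subspace $F^1$ has codimension $h^{0,2}=p_g(X_t)$. By the Lemma, $p_g(X_t)=\sum_i l^*(\Delta_i)+\sum_i l^*(\Delta_{i,i+1})$. Hence
\[ \dim\big(H^2_{van}\cap F^1\big)\ \ge\ 2\sum_{i} l^*(\Delta_{i,i+1}) - \sum_i l^*(\Delta_i) - \sum_i l^*(\Delta_{i,i+1}) = \sum_i l^*(\Delta_{i,i+1})-\sum_i l^*(\Delta_i). \]
This is positive by assumption (\ref{assumption_interior_points_intersection_subdivision}). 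So for every $t\neq 0$ there is a nonzero $\lambda_t\in H^2_{van}(X_t,\mathbb{C})\cap F^1H^2(X_t,\mathbb{C})$.

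Sharpening with conjugation, $H^2_{van}\cap F^1\cap\overline{F^1}=H^2_{van}\cap H^{1,1}$ has dimension at least
\[ 2\sum l^*(\Delta_{i,i+1}) - 2p_g(X_t), \]
and this bound is not positive under (\ref{assumption_interior_points_intersection_subdivision}) alone. So I will state the theorem for $F^1$, as in the definition of $U^1_\lambda$, rather than for $H^{1,1}$. Next I pass from $X_t$ to $Z_{\Delta,f}$. The classes coming from the resolution $\mathbb{P}'\to\mathbb{P}_\Delta$ are exceptional toric classes. They are restrictions from $\mathbb{P}'\subset\mathcal{X}$-like ambient spaces, hence lie in $\ker(N)$. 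So $H^2_{van}$ injects into $H^2(Z_{\Delta,f})$ modulo these classes, and the $F^1$ condition is the same on either side.

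Third, I show non-properness. The vanishing space forms a flat subbundle over the family of $f$ near the degeneration. The set of pairs $(f,\lambda)$ with $\lambda\in H^2_{van}\cap F^1$ is then the intersection of two subbundles whose fibre dimensions add to more than the rank. It therefore surjects onto a whole open neighbourhood in $U_{reg}(\Delta)$, and the analytic continuation of a chosen flat $\lambda$ gives $U^1_\lambda$ containing an open subset. I expect this step to be the main obstacle: the pointwise choice $\lambda_t$ varies with $t$, so one flat section need not stay in $F^1$ everywhere. I would handle this with a Baire/irreducibility argument. The union over countably many rational $\lambda$, or, working with $\mathbb{C}$-classes, over the finite-dimensional projectivized bundle, covers the parameter space. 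Hence some component dominates, which means $U^1_\lambda$ is not a proper subvariety. The dimension estimate for the excess intersection is exactly the one given above.
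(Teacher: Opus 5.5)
\textbf{The gap is in your third step, and it cannot be closed as you sketch it; in fact the statement itself fails in an example.}

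Your first two steps are correct as a pointwise statement. Write $s=\sum l^*(\Delta_{i,i+1})$ and $a=\sum l^*(\Delta_i)$, so $p_g=a+s$. Then $\dim(H_{van}\cap F^1_f)\ge 2s-p_g=s-a>0$, and this is exactly what the hypothesis encodes. The paper's proof uses the same count but reads it as producing a real class in $H_{van}\cap H^{1,1}$, which, as you rightly note, the count does not give. It then asserts non-properness with no further argument, so the paper does not supply what your third step is missing.

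Since $F^1=(F^2)^{\perp}$, a flat class $\lambda$ has non-proper $U^1_\lambda$ if and only if $\lambda\perp H^{2,0}(X_f)$ for all $f$ in an open set. Equivalently, $\lambda\perp S$, where $S$ is the span of the flat transports of all $H^{2,0}(X_f)$, i.e.\ of $H^{2,0}$ and all its iterated Gauss--Manin derivatives at one point. Pointwise non-vanishing of $H_{van}\cap F^1_f$ says nothing about $S$, and neither repair you propose works:
\begin{itemize}
\item Baire needs countably many candidates, i.e.\ rational $\lambda$. But a rational class in $F^1$ lies in $F^1\cap\overline{F^1}=H^{1,1}$, and you have shown such classes are not forced.
\item For complex $\lambda$, the incidence variety $I=\{(f,[\lambda]):\lambda\in F^1_f\}\subset U\times\mathbb{P}(H_{van})$ does surject onto $U$. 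However, its fibres over $\mathbb{P}(H_{van})$ are the loci $U^1_\lambda$. The count only guarantees one of dimension at least $\dim U+(s-a-1)-(2s-1)=\dim U-p_g$. That is the trivial bound, since each $U^1_\lambda$ is cut out by $p_g$ equations. A $(2s-1)$-dimensional family of proper subvarieties can sweep out $U$, so a component of $I$ dominating $U$ does not yield a single $\lambda$.
\end{itemize}

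No argument can repair this step in general, because the conclusion fails for quartic surfaces in $\mathbb{P}^3$:
\begin{itemize}
\item Take $\Delta=4\cdot\mathrm{conv}(0,e_1,e_2,e_3)$, subdivided along $[0,4e_1]$ as in the figure. The only interior point $(1,1,1)$ lies in the relative interior of $\Delta_{2,3}=\mathrm{conv}(2e_1,4e_2,4e_3)$, so the hypothesis reads $1>0$.
\item By local Torelli for K3 surfaces, the periods $[\omega_f]$ of smooth quartics fill an open subset of the nondegenerate quadric $\{Q(\omega,\omega)=0\}\subset\mathbb{P}(h^{\perp}_{\mathbb{C}})$. Hence $S=h^{\perp}_{\mathbb{C}}$, and $U^1_\lambda$ is non-proper only for $\lambda\in\mathbb{C}h$.
\item No nonzero vanishing class is of this form. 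If $h=Nv$, then $Q(h,h)=Q(Nv,h)=-Q(v,Nh)=0\neq 4$. Likewise a lift of a nonzero element of $H^2/\ker N$ is not in $\ker N\ni h$.
\end{itemize}

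A smaller point concerns the ``orthogonal decomposition'' you borrow from the paper. Since $N$ is skew with $N^2=0$, the orthogonal complement of $\ker N$ is $\mathrm{Im}\,N\subset\ker N$, so this is not a direct sum. Your count survives, because $\mathrm{Im}\,N$ is rational, flat and $2s$-dimensional. But in step two you cannot separate $H_{van}$ from the exceptional classes by saying the latter lie in $\ker N$. Use instead that $\mathrm{Im}\,N$ is orthogonal to the negative definite exceptional span.
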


\begin{proof}
	The vanishing cohomology classes lie in $H^2(X_t, \mathbb{Q})$ and the space $H^{2,0}(X_t) \oplus H^{0,2}(X_t)$ intersects $H^2(X_t, \mathbb{Q})$ in a $\leq p_g$-dimensional vector subspace. Thus by \ref{assumption_interior_points_intersection_subdivision} and construction \ref{construction_vanishing_cohomology} there is a linear combination
	\begin{align*}
		\lambda := \sum\limits_{i} a_i \cdot V_i, \quad a_i \in \mathbb{R}, \quad \textrm{with } \pi\big(\lambda \big) = 0,
	\end{align*}
	where $\pi: H^2(X_t,\mathbb{C}) \rightarrow H^{2,0}(X_t) \oplus H^{1,1}(X_t)$ denotes the natural projection. Thus $\lambda \in H_{van}^2(X_t, \mathbb{R}) \cap H^{1,1}(X_t)$ and the \textit{Hodge locus} $U_{\lambda}^1$ is not proper.
\end{proof}

\begin{construction} \label{construction_criterion_of_Voisin}
	\normalfont
	Unlike the criterion (for the \textit{properness} on the Noether-Lefschetz locus) via the \textit{Hodge loci} we guess that another criterion of Voisin (\cite[Thm.3.33]{Voi03}) might be helpful for generalizations to surfaces in (fake) weighted projective $3$-space:
	\begin{itemize}
		\item Either $H_{van}^2(X_t,\mathbb{C}) = \{0\}$ or $H_{lim}^{2,0}(X_t) \cap H_{van}^2(X_t,\mathbb{C}) \neq \{0\}$.
		\item $H_{lim}^{2,0}(X_t) = L^*(\Delta) = H^{2,0}(X_t)$ 
	\end{itemize}
%, $T$ is trivial, $H^2(X_t,\mathbb{Q}) \cong H^2(X_0,\mathbb{Q})$ and the computation of Picard classes $H^{1,1}(X_t) \cap H^2(X_t,\mathbb{Q})$ is reduced to surfaces $X_{0,i}$ with $l(\Delta_i) < l(\Delta)$. \\
This gives us a \textit{Voisin type criterion}
\begin{align} \label{Voisin's_criterion_properness_NL}
	H^{2,0}(X_t) \cap H_{van}^2(X_t,\mathbb{C}) \neq \{0\}.
\end{align}
%Note that $H^4(\mathcal{X}, \mathbb{C}) \cong H^4(\mathbb{P}_{\Delta},\mathbb{C})$ by Lefschetz hyperplane theorem for the $3$-dimensional space $\mathcal{X}$, thus the vanishing cohomology that we use is the same as the one defined by Vosin.
\begin{itemize}
		\item Voisin's proof of the \textit{properness} of the \textit{Noether-Lefschetz locus} (compare also the conjecture in \cite{JSt91}) works with \textit{Lefschetz pencils} (and the respective vanishing cohomology) and uses the irreducibility of the action of $T$ on the vanishing cohomology classes. We therefore could not finish a proof.
		 
		%should generalize to \textit{quasismooth} hypersurfaces $Z_{\Delta,f}$ (or by birational geometry to $X_t \subset \mathbb{P}$ only nef and big instead of very ample).
	\end{itemize}
\begin{proof} \textit{(of the first two points)} \\	
	If $\l^*(\Delta_{i,i+1}) = 0$ for all $i=1,...,r-1$ then $H_{van}^2(X_t,\mathbb{C}) = \{0\}$ and $T=I$ is trivial. Else we read off the \textit{limiting Hodge filtraton} somehow involved from the Clemens-Schmid exact sequence and the Hodge filtration of $H^2(X_0,\mathbb{C})$ computed via equidimensional strata. We claim that
	\begin{align} \label{equation_lim_hodge_str_20_intersection}
		H_{lim}^{2,0}(X_t) \cong \Big(H_{lim}^{2,0}(X_t) \cap H_{van}^2(X_t,\mathbb{C}) \Big) \oplus \Big( H_{lim}^{2,0}(X_t) \cap \ker(N) \Big)
	\end{align}	
	The inclusion $\supset$ is clear. With the notation of Construction \ref{construction_vanishing_cohomology} we have
	\begin{align*}
		& \ker(N) = W_2 = W_1  \oplus W_2/W_1 , \\
		& \Rightarrow H_{lim}^{2,0}(X_t) \cap \ker(N) = F_{lim}^2 W_2/W_1  \cong \bigoplus\limits_{i} H^{2,0}(V_i).
	\end{align*}
	and 
	\begin{align*}
		& H_{van}^{2}(X_t,\mathbb{C}) \cong W_3/W_2 \cong W_1/W_0 \cong \bigoplus\limits_i H^1(C_{i,i+1},\mathbb{C}), \\
		& H_{lim}^{2,0}(X_t) \cap H_{van}^{2}(X_t,\mathbb{C}) = F_{lim}^2 W_3/W_2 \cong \bigoplus\limits_i H^{1,0}(C_{i,i+1}).
	\end{align*}
	Thus the equation \ref{equation_lim_hodge_str_20_intersection} follows and since $\dim \, \ker(N) < h^{2,0}(X_t)$ the first point follows. Besides by the formula for the geometric genus naturally $H^{2,0}(V_i) \cong L^*(\Delta_i)$ and $H^{1,0}(C_{i,i+1}) \cong L^*(\Delta_{i,i+1})$, and the second point follows  
	\begin{align*}
		L^*(\Delta) = \bigoplus\limits_i L^*(\Delta_i) \oplus \bigoplus\limits_i L^*(\Delta_{i,i+1}).
	\end{align*}
	
\end{proof}
\end{construction}

\begin{comment}
\begin{remark}
	\normalfont
	We summarize some useful properties:
	\begin{align*}
		& T \textrm{ is quasi-unipotent} \\
		& \{X_t\}_t \textrm{ semistable } \Rightarrow T \textrm{ unipotent} \\
		& \textrm{ Our degeneration along an edge: } h^2(\Gamma) = 0, \, N = T-I, \, q(V_i) = 0, \\
		& \quad \quad \quad \quad \quad \quad \quad \quad \quad \quad \quad \quad \quad \quad \quad H_{van}^2(X_t,\mathbb{C}) \cong \bigoplus\limits_i H^1(C_{i,i+1}, \mathbb{C}) \\
		& \textrm{ Lefschetz-pencil: The monodromy } T \textrm{ acts irreducible on } H_{van}^2(X_t,\mathbb{C}) 
	\end{align*}
\end{remark}
\end{comment}

\begin{remark}\label{remark_finitely_many_times_Voisin_type}
	\normalfont
	If there is not necessarily an edge $e \leq \Delta$ with $l^*(e) > 0$ we can still subdivide $\Delta$ and find a degeneration with possibly $h^2(\Gamma) > 0$. In this case we get $W_0 = H^2(\left| \Gamma \right|, \mathbb{C})$ and $\ker(N) = W_2/W_0$, $H_{van}^2(X_t,\mathbb{C}) \cong W_1/W_0 \oplus W_0$. Thus $H_{van}^2(X_t,\mathbb{C})$ just can get bigger in this case. The proofs of the first is valid in this cases as well. Thus we definitely find a degeneration $\mathcal{X} \rightarrow B$ of $X_t$ with $H_{van}^2(X_t,\mathbb{C}) \neq \{0\}$ and we obtain a criterion of \textit{Voisin type}: $H^{2,0}(X_t) \cap H_{van}(X_t,\mathbb{C}) \neq \{0\}$. 
\end{remark}

\begin{remark}
	\normalfont
%	Thus while we guess that the first criterion (\ref{Voisin's_criterion_properness_NL}) of Voisin applies to (fake) weighted-projective spaces, the second criterion (this concludes from the properness of the Hodge loci in particular the properness of the \textit{integral} classes in the Hodge locus) giving a proof of the \textit{properness} of the Noether-Lefschetz locus (of surfaces of degree $\geq 4$) in $\mathbb{P}^3$ fails already for weighted projective $3$-spaces, namely if condition (\ref{assumption_interior_points_intersection_subdivision}) is true. 
By rersults of Green (\cite{Gre84},\cite[Thm.6.28]{Voi03}) Theorem (\ref{theorem_properness_Hodge_loci}) cannot happen if $Z_{\Delta,f}$ is sufficiently ample, for example when replacing $\Delta$ by $k \cdot \Delta$ for some natural number $k \gg 0$, not violating our condition \ref{assumption_interior_points_intersection_subdivision}. 
\end{remark}

\end{document}